\newcommand{\field}[1]{\mathbb{#1}}
\newcommand{\CC}{\field{C}}
\newcommand{\MM}{\field{M}}
\newcommand{\TT}{\field{T}}
\newcommand{\ZZ}{\field{Z}}
\newcommand{\RR}{\field{R}}
\newcommand{\Aa}{\mathcal{A}}
\newcommand{\Cc}{\mathcal{C}}
\newcommand{\Ee}{\mathcal{E}}
\newcommand{\Hh}{\mathcal{H}}
\newcommand{\Kk}{\mathcal{K}}
\newcommand{\Ll}{\mathcal{L}}
\newcommand{\Mm}{\mathcal{M}}
\newcommand{\Oo}{\mathcal{O}}
\newcommand{\Zz}{\mathcal{Z}}
\newcommand{\la}{\langle}
\newcommand{\ra}{\rangle}
\newcommand{\ve}{\varepsilon}
\newtheorem{thm}{Theorem}[section]
\newtheorem{cor}[thm]{Corollary}
\newtheorem{prop}[thm]{Proposition}
\theoremstyle{definition}
\newtheorem{dfn}[thm]{Definition}
\theoremstyle{remark}
\newtheorem{rmk}[thm]{Remark}
\newtheorem{example}[thm]{Example}
\newtheorem{examples}[thm]{Examples}
\newtheorem*{examples*}{Examples}
\numberwithin{equation}{subsection}
\title{Symmetries of the $C^*$-algebra of a vector bundle}
\author{Valentin Deaconu}
\address{Valentin Deaconu \\ Department of Mathematics \& Statistics\\ University
of Nevada\\ Reno NV 89557-0084\\ USA} \email{vdeaconu@unr.edu}
\keywords{Vector bundle; $C^*$-correspondence; Group action; Continuous fields;  Morita equivalence; Cuntz-Pimsner algebra.}
\subjclass{Primary 46L05.}
\begin{document}
\begin{abstract}We consider $C^*$-algebras constructed from compact group actions  on complex vector bundles $E\to X$ endowed with a Hermitian metric. An action of $G$   by isometries on $E\to X$ induces an  action  on the $C^*$-correspondence $\Gamma(E)$  over $C(X)$ consisting of continuous sections, and on the associated Cuntz-Pimsner algebra $\Oo_E$, so we can study the crossed product $\Oo_E\rtimes G$.
If the action  is free and rank $E=n$, then we prove that $\Oo_E\rtimes G$ is Morita-Rieffel equivalent to a field of Cuntz algebras $\Oo_n$ over the orbit space $X/G$.
 If the action   is fiberwise, then $\Oo_E\rtimes G$ becomes a continuous field of crossed products $\Oo_n\rtimes G$. For transitive  actions, we show that $\Oo_E\rtimes G$ is Morita-Rieffel equivalent to a graph $C^*$-algebra.
\end{abstract}
\maketitle
\section{introduction}

\bigskip

Given a  vector bundle $p:E\to X$ over a compact Hausdorff space $X$, we denote by Aut$(E)$ the  group  consisting of pairs $(\varphi, \Phi)$ where $\varphi :X\to X$ is a homeomorphism, $\Phi:E\to E$ is a map such that $p\circ \Phi=\varphi\circ p$ and if $E_x=p^{-1}(x)$ is the fiber over $x$, then all restrictions
$\Phi_x:E_x\to E_{\varphi(x)}$ are linear isomorphisms. An  action of a group $G$ on $E$ is understood as a homomorphism $G\to$Aut$(E)$. If $G$ acts trivially on $X$,   then we say that the action on $E$ is fiberwise. The set of fiberwise automorphisms is a subgroup  of Aut$(E)$, denoted Aut$_X(E)$.

In this paper, we consider $C^*$-algebras constructed from compact group actions  on complex vector bundles $E\to X$ endowed with a Hermitian metric. By taking continuous sections, we get a finitely generated projective Hilbert module $\Gamma(E)$ over $A=C(X)$,  viewed as a   $C^*$-correspondence   with the same left and right multiplications and the usual inner product.  The associated Cuntz-Pimsner algebra $\Oo_{C(X)}(\Gamma(E))$, also denoted $\Oo_E$ for short,  is a continuous field of Cuntz algebras, see Proposition 2 in \cite{V}. An action of $G$  on $E$ by isometries induces an  action of $G$ on $\Gamma(E)$ and on $\Oo_E$, so  one can study the crossed product $\Oo_E\rtimes G$. From the theorem of Hao-Ng \cite{HN}, we have the isomorphism
$\Oo_E\rtimes G\cong \Oo_{C(X)\rtimes G}(\Gamma(E)\rtimes G)$, so it is useful to understand the crossed product correspondence $\Gamma(E)\rtimes G$ as a kind of noncommutative bundle over $C(X)\rtimes G$, which in some cases is Morita-Rieffel equivalent to an abelian $C^*$-algebra.

In particular, if the action of $G$ is free and rank $E=n$, then we show that $\Oo_E\rtimes G$ is Morita-Rieffel equivalent to a field of Cuntz algebras $\Oo_n$ over the orbit space $X/G$.
 If the action of $G$  is fiberwise, then we prove that $\Oo_E\rtimes G$ becomes a continuous field of crossed products $\Oo_n\rtimes G$. For transitive  actions, the vector bundle $E$ has a special form, and $\Oo_E\rtimes G$ is Morita-Rieffel equivalent to a graph $C^*$-algebra.

 \bigskip

\section{Group actions on vector bundles}

We collect here some facts about $G$-vector bundles over $G$-spaces. For simplicity, in section 1.6 of \cite{A}, the author considers only  the finite group case, but assuming continuity of the maps, all the results generalize to compact groups, see \cite{S}. 

Let's fix a compact group $G$ with identity $e$. A $G$-space is a topological space $X$ together with a continuous action $G\times X\to X$ written $(g,x)\mapsto g\cdot x$ satisfying the usual conditions $g\cdot(g'\cdot x)=(gg')\cdot x$ and $e\cdot x=x$.

\begin{dfn} 
If $X$ is a $G$-space, then a $G$-vector bundle on $X$  is a (locally trivial) complex vector bundle $p: E\to X$ such that $E$ is a $G$-space, $p$ is a $G$-map i.e. $p(g\cdot v)=g\cdot p(v)$, and the  maps $E_x\to E_{g\cdot x}$ between fibers are linear.

 \end{dfn}

\begin{example}

If $X$ is a differentiable manifold and $G$ is a group acting smoothly on $X$, then the complexified tangent bundle $E=TX\otimes \CC$ and all the associated tensor bundles $E^{\otimes n}$ become  $G$-vector bundles.

\end{example}

\begin{example}
If $E$ is a complex vector bundle on $X$, then the $k$-fold tensor product $E^{\otimes k}$ becomes an $S_k$-vector bundle on $X$, where the symmetric group $S_k$ fixes the points in $X$ and permutes the tensor factors.
\end{example}

\begin{rmk} If $X$ reduces to a point, then a $G$-vector bundle is just a finite dimensional representation of $G$. If $X$ is a trivial $G$-space, i.e. $g\cdot x=x$ for all $g\in G$ and $x\in X$, then a $G$-vector bundle is a family of representations  of $G$ varying continuously with $x$. 
\end{rmk}

Recall that $X$ is a free $G$-space if $g\neq e$ implies $g\cdot x\neq x$. We denote by $X/G$ the orbit space, and let $\pi:X\to X/G$ be the canonical map. 

\begin{rmk}
Given a $G$-vector bundle $E$ over a free $G$-space $X$, then $E$ is necessarily a free $G$-space, and  $E/G$ has a natural vector bundle structure over $X/G$. In fact, $E/G\to X/G$ is locally isomorphic to $E\to X$ and the action of $G$ on $E/G$ is trivial. 

If $X$ is a free $G$-space and $p:F\to X/G$ is any vector bundle over $X/G$, then the pull back \[\pi^*F=\{(x,v)\in X\times F\;\mid \; \pi(x)=p(v)\}\] is a $G$-vector bundle over $X$ with action $g\cdot (x,v)=(g\cdot x, v)$.
\end{rmk}
 In particular, we have
\begin{prop}
$G$-vector bundles over a free $G$-space $X$ correspond bijectively to vector bundles over $X/G$.
\end{prop}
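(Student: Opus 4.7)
The plan is to turn the two constructions outlined in the preceding remark into maps between isomorphism classes of $G$-vector bundles over $X$ and vector bundles over $X/G$, and then show they are mutually inverse. In one direction, a $G$-vector bundle $p:E\to X$ is sent to the quotient $E/G\to X/G$; in the other direction, a vector bundle $F\to X/G$ is sent to the pullback $\pi^*F\to X$ with the action $g\cdot(x,v)=(g\cdot x,v)$. Both assignments are compatible with isomorphism, so it suffices to produce natural isomorphisms in each direction.

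For $\pi^*(E/G)\cong E$, I would use the map $\Psi:E\to\pi^*(E/G)$ defined by $\Psi(v)=(p(v),[v])$, where $[v]\in E/G$ denotes the orbit of $v$. This is continuous, fiberwise linear, and $G$-equivariant since $\Psi(g\cdot v)=(g\cdot p(v),[v])=g\cdot\Psi(v)$. Injectivity on fibers uses freeness: if $v,w\in E_x$ satisfy $[v]=[w]$, then $w=g\cdot v$ for some $g\in G$, hence $g\cdot x=x$, forcing $g=e$ and $v=w$. Surjectivity on fibers is immediate from the definition of the pullback. For $(\pi^*F)/G\cong F$, I would use $[(x,v)]\mapsto v$, which is well-defined because the $G$-action fixes the second coordinate, surjective because every $v\in F$ lifts via any $x\in\pi^{-1}(p(v))$, and injective since if $(x,v)$ and $(x',v')$ lie in $\pi^*F$ with $v=v'$, then $\pi(x)=p(v)=p(v')=\pi(x')$, so by freeness $x'=g\cdot x$ for a unique $g$, placing them in the same orbit.

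The main obstacle is justifying local triviality of $E/G\to X/G$, which the remark asserts but does not verify. Compactness of $G$ and freeness of the action make $\pi:X\to X/G$ a principal $G$-bundle, so it admits local sections $s:U\to X$ for small $U\subset X/G$; trivializing $E$ over $s(U)$ and transporting the trivialization down by the orbit map yields a local trivialization of $E/G$ over $U$. Once this is in place, the two isomorphisms above are continuous with continuous inverses by a standard check on local charts, and the bijection follows.
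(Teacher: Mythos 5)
Your proof is correct and follows essentially the same route as the paper, whose ``proof'' is just the preceding remark describing the quotient construction $E\mapsto E/G$ and the pullback construction $F\mapsto\pi^*F$; you supply the explicit mutually inverse isomorphisms $\Psi(v)=(p(v),[v])$ and $[(x,v)]\mapsto v$, together with the freeness arguments, that the paper leaves implicit. The one caveat is your assertion that compactness plus freeness makes $\pi:X\to X/G$ a principal bundle with local sections: this is Gleason's theorem for compact \emph{Lie} groups but can fail for arbitrary compact groups, though the paper glosses over exactly the same point by deferring to Atiyah and Segal.
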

Assume now that the action on $X$ is trivial, hence $G$ acts on $E\to X$ fiberwise. By averaging over $G$, we get a projection \[P:E\to E^G,\; P(v)=\int_Gg\cdot v\; dg,\] where $E^G$ is the subbundle of fixed vectors and $dg$ is the normalized Haar measure on $G$. In particular, if $E,F$ are $G$-bundles over $X$ and the action on $X$ is trivial, we have \[Hom_G(E,F)\cong Hom(E,F)^G,\] where  $Hom(E,F)$ is the vector bundle over $X$ with fibers $Hom(E_x, F_x)$ and $Hom_G(E,F)$ is the set of $G$-equivariant morphisms.

Denote by $\{V_i\}_{i\in I}$ the set of irreducible representations of $G$, so that any representation $V$ of $G$ is isomorphic to a direct sum $\bigoplus_{i\in I} n_iV_i$, where $n_i$ are multiplicities.  Take $W_i=X\times V_i$ to be the product bundle with the natural $G$-action $g\cdot (x,v)=(x,g\cdot v)$.  If $E$ is a $G$-bundle over $X$,  then the bundle map \[\bigoplus_i W_i\otimes Hom_G(W_i,E)\to E\] is an isomorphism  of $G$-bundles and the action on $Hom_G(W_i,E)$ is trivial. We conclude that 
\begin{prop} 
For $X$ a trivial $G$-space, every $G$-bundle $E$ over $X$ is isomorphic to a direct sum $\bigoplus_i W_i\otimes E_i$, where $W_i=X\times V_i$ as above and $E_i=Hom_G(W_i,E)$ are vector bundles with trivial $G$-action. 
\end{prop}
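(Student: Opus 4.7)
The plan is to show that the natural $G$-equivariant evaluation map
\[\Psi\colon \bigoplus_i W_i\otimes E_i \to E,\qquad \Psi(w\otimes \varphi)=\varphi(w),\]
is an isomorphism of $G$-vector bundles, where $G$ acts on $W_i$ via its action on $V_i$ and trivially on $E_i=Hom_G(W_i,E)$. The equivariance of $\Psi$ is automatic, since each $\varphi\in Hom_G(W_i,E)$ is itself $G$-equivariant.

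First I would justify that each $E_i$ is genuinely a (locally trivial) vector bundle over $X$ carrying the trivial $G$-action. Using the identification $Hom_G(W_i,E)\cong Hom(W_i,E)^G$ from the preceding paragraph, $E_i$ is the image of the averaging projection $P_i$ on the vector bundle $Hom(W_i,E)$. Being a continuous idempotent endomorphism, $P_i$ has rank equal to its trace, a continuous and integer-valued function on $X$, hence locally constant. Therefore $E_i$ is a locally trivial subbundle of $Hom(W_i,E)$, and its fibers consist of $G$-fixed morphisms, so the $G$-action on $E_i$ is trivial. The same local constancy of ranks shows that at any $x\in X$ only finitely many multiplicities $\dim Hom_G(V_i,E_x)$ are nonzero, and this finiteness extends to a neighborhood; so the direct sum $\bigoplus_i W_i\otimes E_i$ is locally finite and is itself a vector bundle.

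The crux is verifying that $\Psi$ is a fiberwise isomorphism. At each $x\in X$, $\Psi_x$ becomes the canonical evaluation
\[\bigoplus_i V_i\otimes Hom_G(V_i,E_x)\to E_x,\]
which is exactly the isotypic decomposition of the finite-dimensional unitary representation $E_x$ of the compact group $G$. By complete reducibility (Peter--Weyl) together with Schur's lemma, this map is a $G$-linear isomorphism. A continuous bundle map that is a linear isomorphism on every fiber is a bundle isomorphism, so $\Psi$ yields the desired isomorphism of $G$-vector bundles.

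The only mild obstacle is confirming the vector bundle structure of $E_i$, which reduces to the local constancy of the rank of the idempotent $P_i$; once that is in place the rest is essentially the classical Peter--Weyl decomposition performed fiberwise.
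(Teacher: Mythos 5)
Your proposal is correct and follows essentially the same route as the paper, which (citing Atiyah and Segal) simply asserts that the evaluation map $\bigoplus_i W_i\otimes Hom_G(W_i,E)\to E$ is an isomorphism of $G$-bundles with trivial action on $Hom_G(W_i,E)$. You supply exactly the standard details behind that assertion: local triviality of $E_i$ via the locally constant rank of the averaging idempotent, and the fiberwise Peter--Weyl isotypic decomposition.
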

Suppose now that  $X$ is a transitive $G$-space and fix $x\in X$ with stabilizer group $G_x$. Then there is a natural continuous bijection of $G$-spaces \[f:G/G_x\to X, f(tG_x)=t\cdot x.\]
If $H\subseteq G$ is a subgroup and $p:E\to G/H$ is a $G$-vector bundle over the homogeneous space $G/H=X$, then the stabilizer group $H$ of the coset $x=eH$ acts on the fiber $E_x$ as a group representation and $E_x$ becomes an $H$-module. Let $G\times_HE_x$ denote the orbit space of $G\times E_x$ under the action of $H$ given by $h(g,v)=(gh^{-1}, hv)$. Then $G\times_HE_x$ becomes a $G$-vector bundle over $G/H$ via the map $q: G\times_H E_x\to G/H$, $q(g,v)=gH$ and the action $g'(g,v)=(g'g,v)$. Moreover, the map \[\Phi: G\times_H E_x\to E,\; \Phi(g,v)=gv\] is an isomorphism of $G$-bundles over $G/H$.

Conversely, to each representation $V$ of $H$, we can associate a vector bundle $G\times_H V$ over $G/H$. Hence we obtain

\begin{prop} Any $G$-vector bundle over the homogeneous space $G/H$ is of the form $G\times_H V$ for some $H$-module $V$.
\end{prop}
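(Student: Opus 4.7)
The plan is to take the construction sketched in the paragraph immediately before the proposition and verify that the map $\Phi$ given there really is a $G$-bundle isomorphism; then the proposition follows by setting $V=E_x$. So the starting point is a $G$-vector bundle $p:E\to G/H$, the basepoint $x=eH$, and the $H$-module $V:=E_x$, where $H$ acts linearly because $H$ stabilizes $x$ and each $h\in H$ induces a linear isomorphism $E_x\to E_{h\cdot x}=E_x$.

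First I would check the formal properties of $\Phi:G\times_H V\to E$, $\Phi([g,v])=g\cdot v$. Well-definedness on $H$-orbits is the computation $\Phi([gh^{-1},hv]) = gh^{-1}\cdot(hv) = g\cdot v$, using the group action. $G$-equivariance and compatibility with the projections are then immediate: $\Phi([g'g,v])=g'\cdot\Phi([g,v])$ and $p(\Phi([g,v])) = g\cdot x = gH = q([g,v])$. Fiberwise, the restriction $V\to E_{gH}$, $v\mapsto g\cdot v$, is linear (because $g\in\mathrm{Aut}(E)$ acts linearly on fibers) and it is a bijection of vector spaces, with inverse $w\mapsto g^{-1}\cdot w$. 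Injectivity of $\Phi$ on the total space reduces to this: if $g_1\cdot v_1 = g_2\cdot v_2$, then the image lies in a single fiber, so $g_2=g_1h$ for some $h\in H$, and then $v_1 = h\cdot v_2$, which gives $[g_1,v_1]=[g_1h,v_2]=[g_2,v_2]$. Surjectivity is the same fiberwise computation: any $w\in E_{gH}$ is $\Phi([g,g^{-1}\cdot w])$.

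The remaining step, and the only one that is not purely algebraic, is showing $\Phi$ is a homeomorphism (hence a vector bundle isomorphism). For this I would use local triviality of $p:E\to G/H$: pick a local section $s:U\to G$ of $G\to G/H$ over an open neighbourhood $U$ of $eH$ (which exists because $G\to G/H$ is a principal $H$-bundle), and use it to trivialize $E$ over $U$ via $(u,v)\mapsto s(u)\cdot v$, with $v\in V=E_{eH}$; translating by elements of $G$ then gives trivializations of $E$ over a cover of $G/H$ on which $\Phi$ is exactly the identity $U\times V\to U\times V$ in local coordinates. Continuity and openness of $\Phi$ both follow from this. For the converse half of the correspondence, any $H$-module $V$ yields a $G$-vector bundle $G\times_H V\to G/H$ by the construction already recalled in the excerpt, so the association is a bijection up to isomorphism.

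The main obstacle is the topological one: verifying continuity of $\Phi^{-1}$. Once one has a local section of $G\to G/H$, this becomes routine, so the real content of the proposition is recognizing that choosing a basepoint in $G/H$ collapses the classification of $G$-bundles over the homogeneous space to the representation theory of the stabilizer.
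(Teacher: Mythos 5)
Your proposal is correct and follows essentially the same route as the paper: the paper's ``proof'' is just the paragraph preceding the proposition, which sets $V=E_x$ for $x=eH$ and asserts that $\Phi(g,v)=gv$ defines a $G$-bundle isomorphism $G\times_H E_x\to E$. You verify exactly this construction, filling in the algebraic checks and the topological point (continuity of $\Phi^{-1}$ via local sections of $G\to G/H$) that the paper leaves implicit.
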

\bigskip


\section{Group actions on $C^*$-correspondences and crossed products}
\bigskip

Recall that given  $C^*$-algebras $A$ and $B$, a $C^*$-correspondence $\Hh={}_A\Hh_B$ from $A$ to $B$ is a right Hilbert $B$-module with a $*$-homomorphism $\phi:A\to\Ll(\Hh)$ which gives the left multiplication. Here $\Ll(\Hh)=\Ll_B(\Hh)$ denotes the set of adjointable operators, which are automatically bounded and $B$-linear. We often write $a\xi$ for $\phi(a)\xi$. 

If the map $\phi$ is injective, then $\Hh$ is called faithful. If the span of $\phi(A)\Hh$ is dense in $\Hh$, then $\Hh$ is called essential or nondegenerate. If the inner products $\la \xi, \eta\ra$ generate $B$, then $\Hh$ is called full.

\begin{dfn}
A representation  of a $C^*$-correspondence ${}_A\Hh_B$ is a triple $(\pi_A, \tau_\Hh, \pi_B)$ consisting of nondegenerate representations $\pi_A$ and $\pi_B$ of $A$ and $B$ on Hilbert spaces $H_A$ and $H_B$ respectively, together with a linear map $\tau_\Hh:\Hh\to \Ll(H_B, H_A)$ such that
\[\tau_\Hh(a\xi)=\pi_A(a)\tau_\Hh(\xi),\; \tau_\Hh(\xi b)=\tau_\Hh(\xi)\pi_B(b),\; \pi_B(\la \xi, \eta\ra)=\tau_\Hh(\xi)^*\tau_\Hh(\eta)\]
for all $a\in A, \xi, \eta\in \Hh$ and $b\in B$. Here $\Ll(H_B, H_A)$ is the set of linear bounded operators between the Hilbert spaces $H_B$ and $H_A$.
\end{dfn}

Given a locally compact group $G$, an action on ${}_A\Hh_B$ is determined by a homomorphism $\rho:G\to {\mathcal L}_{\mathbb C}({\mathcal H})$ such that $\rho_g$ is a ${\mathbb C}$-linear isomorphism and $g\mapsto \rho_g\xi$ is continuous for all $\xi\in \Hh$,  and by continuous actions of $G$ on $A$ and $B$ given by $\alpha:G\to$Aut$(A)$, $\beta:G\to$Aut$(B)$ with compatibility relations
\[ \langle \rho_g(\xi),\rho_g(\eta)\rangle=\beta_g(\langle\xi,\eta\rangle) ,\;\rho_g(\xi b)=\rho_g(\xi)\beta_g (b),\; \rho_g(a\xi)=\alpha_g( a)\rho_g(\xi),\]
where $\xi, \eta \in {\mathcal H}, a\in A, b\in B$.
 
 \bigskip

\begin{dfn}Suppose the locally compact group $G$ acts on the $C^*$-correspondence ${\mathcal H}$ from $A$ to $B$ via maps $\rho, \alpha, \beta$ as above.
 The  crossed product  ${\mathcal H}\rtimes_\rho G$ 
  is a  $C^*$-correspondence from $A\rtimes_\alpha G$ to $B\rtimes_\beta G$, and it can be obtained as the completion of $C_c(G,{\mathcal H})$ using the operations
  \[\langle \xi, \eta\rangle(t)=\int_G\beta_{s^{-1}}( \langle \xi(s),\eta(st)\rangle) ds,\]
                \[(\xi\cdot f)(t)=\int_G\xi(s)\beta_s(f(s^{-1}t))ds,\]
                \[(h\cdot\xi)(t)=\int_Gh(s)\rho_s(\xi(s^{-1}t))ds,\]
  where $\xi,\eta\in C_c(G,{\mathcal H}), f\in C_c(G,B), h\in C_c(G,A)$. 
 \end{dfn} 
  \begin{rmk}The crossed product correspondence ${\mathcal H}\rtimes_\rho G$ can also be obtained as ${\mathcal H}\otimes_{\varphi}(B\rtimes_\beta G)$, where $\varphi: B\to {\mathcal L}(B\rtimes_\beta G)$ is the embedding of $B$ in the multiplier algebra of $B\rtimes_\beta G$, regarded as a Hilbert module over itself. 
  
  The group $G$ acts on ${\mathcal H}\otimes_{\varphi}(B\rtimes_\beta G)$ by
  $g(\xi\otimes f)=\rho_g(\xi)\otimes  u_gfu_g^{-1}$, where $f\in B\rtimes_\beta G$ and $u:G\to \Ll(B\rtimes_\beta G)$ is the unitary representation such that $\beta_g(b)=u_gbu_g^{-1}$. If $U_g(\xi\otimes f)=\rho_g(\xi)\otimes u_gf$, then $U_g\in \Ll(\Hh\rtimes_\rho G)$ with $U_g^*=U_{g^{-1}}$ and $U_g(\xi\otimes f)U_{g^{-1}}=\rho_g(\xi)\otimes u_gfu_g^{-1}$.
  
  The left  and right multiplications on ${\mathcal H}\otimes_{\varphi}(B\rtimes_\beta G)$ are given by convolution,
  \[(h\cdot(\xi\otimes f))(t)=\int_Gh(s)\rho_s(\xi)\beta_s(f(s^{-1}t))ds,\;\;(\xi\otimes f)\cdot f'=\xi\otimes ff'\]
  for $h\in C_c(G,A)$ and $ f,f'\in C_c(G,B)$. The inner product formula could be expressed as
  \[\langle\xi\otimes f, \eta\otimes f'\rangle=f^*\langle \xi, \eta\rangle f',\]
  where  $\xi, \eta\in {\mathcal H}$ and $f^*(t)=\Delta(t)^{-1}\alpha_t(f(t^{-1})^*)$, with $\Delta$  the modular function on $G$.
  The isomorphism with the crossed product defined using a completion of $C_c(G,{\mathcal H})$ is induced by the function 
  \[\Phi:\Hh\otimes C_c(G,B)\to C_c(G,\Hh), \; \Phi(\xi\otimes f)(g)=\xi f(g).\]
\end{rmk}

\bigskip

\begin{dfn} Suppose the locally compact group $G$ acts on the $C^*$-correspondence ${\mathcal H}$ from $A$ to $B$ via maps $\rho, \alpha, \beta$ as above.
If $(\pi_A, u,H_A)$ and $(\pi_B,v,H_B)$ are covariant representations of $(A,G, \alpha)$ and $(B,G,\beta)$, then $(\pi_A, \tau_{\Hh}, \pi_B,u,v)$ is called  a covariant representation of $(\Hh,G,\rho)$  in $\Ll(H_B, H_A)$ if
\[\tau_{\Hh}(\rho_g(\xi))=u_g\tau_{\Hh}(\xi)v_g^*.\]
\end{dfn}
In this case, we can define the representation $\tau_{\Hh}\times v$ of $\Hh\rtimes_\rho G$ into $\Ll(H_B,H_A)$ by
\[(\tau_{\Hh}\times v)(\xi)=\int_G\tau_{\Hh}(\xi(s))v_sds\]
for $\xi\in C_c(G,\Hh)$. 
Then $(\pi_A\times u,\tau_{\Hh}\times v, \pi_B\times v)$ becomes a representation of the $C^*$-correspondence $\Hh\rtimes_\rho G$ from $A\rtimes_\alpha G$ to $B\rtimes_\beta G$.

\begin{example} (The reduced crossed product) Consider $(\pi_A,\tau_\Hh, \pi_B)$  a representation of  ${}_A\Hh_B$ such that $\pi_A:A\to \Ll(H_A)$ and $\pi_B:B\to \Ll(H_B)$  are faithful. If $\lambda$ is the left regular representation of $G$ given by $(\lambda_s f)(t)=f(s^{-1}t)$ for $f\in L^2(G, H_A)\cup L^2(G,H_B)$, then we define \[(\tilde{\pi}_A(a)f)(t)=\pi_A(\alpha_{t^{-1}}(a))f(t),\;\;\;(\tilde{\pi}_B(b)f)(t)=\pi_B(\beta_{t^{-1}}(b))f(t).\] We get a covariant representation $(\tilde{\pi}_A, \tilde{\tau}_{\Hh},\tilde{\pi}_B, \lambda, \lambda) $ of $\Hh$  by taking
\[(\tilde{\tau}_{\Hh}(\xi)f)(t)=\tau_{\Hh}(\rho_{t^{-1}}(\xi))f(t)\]
for $f\in L^2(G,H_B)$ and $(\tilde{\pi}_A\times \lambda, \tilde{\tau}_{\Hh}\times \lambda, \tilde{\pi}_B\times \lambda)$ is a representation of $\Hh\rtimes_\rho G$ into $\Ll(L^2(G, H_B), L^2(G,H_A))$. Its image $(\tilde{\tau}_{\Hh}\times \lambda)(\Hh\rtimes_\rho G)$ is called the reduced crossed product, denoted $\Hh\rtimes_{\rho, r}G$, which is a $C^*$-correspondence from $A\rtimes_{\alpha, r}G$ to $B\rtimes_{\beta, r}G$. If $G$ is amenable, then
$\Hh\rtimes_\rho G\cong \Hh\rtimes_{\rho, r}G$.
\end{example}

 \begin{rmk} The crossed product $\Hh\rtimes_\rho G$  can be characterized as universal for covariant representations.
  \end{rmk}

 Consider $\Cc$ the category in which the objects are $C^*$-algebras and in which the morphisms from $A$ to $B$ are the isomorphism classes of full $C^*$-correspondences from $A$ to $B$. The composition of $[\Hh]:A\to B$ with $[\Mm]:B\to C$ is the isomorphism class of $\Hh\otimes_B\Mm$ as a correspondence from $A$ to $C$. The isomorphisms in $\Cc$ are given by imprimitivity bimodules, see \cite{EKQR}.

Denote by $\Aa(G)$ the category in which the objects are $C^*$-algebras with actions of $G$ and in which the morphisms from $(A, G, \alpha)$ to $(B, G, \beta)$ are equivariant isomorphism classes of full $C^*$-correspondences from $A$ to $B$ with compatible actions  of $G$. The composition of $[\Hh, G, \rho]:(A, G, \alpha)\to (B, G,\beta)$ with $[\Mm, G, \sigma]:(B, G, \beta)\to (C, G, \gamma)$ is the isomorphism class of the tensor product action $(\Hh\otimes_B\Mm, G, \rho\otimes_B\sigma)$. 

\begin{rmk}
There is a functor  from the category $\Aa(G)$ to $\Cc$ taking a dynamical system $(A, G,\alpha)$ into $A\rtimes_\alpha G$ and a $C^*$-correspondence $(\Hh, G, \rho)$ into $\Hh\rtimes_\rho G$ (see section 3 in \cite{EKQR}).

In particular, for $G$ a compact group, $A\rtimes_\alpha G$ can be identified with a subalgebra of $A\otimes \Kk(L^2(G))$, see Proposition 4.3 in \cite{R} and $\Hh\rtimes_\rho G$ with a subspace of $\Hh\otimes \Kk(L^2(G))$.
\end{rmk}
\bigskip
\section{Group actions on Cuntz-Pimsner algebras and Morita-Rieffel equivalence}

\bigskip

If $A=B$, a  $C^*$-correspondence $\Hh$ from $A$ to $A$ is called a $C^*$-correspondence over $A$.
An action $\rho$ of $G$ on the $C^*$-correspondence ${\mathcal H}$ over $A$ determines an action on $\Kk(\Hh)={\mathcal K}_A({\mathcal H})$, the $C^*$-algebra generated by the finite rank operators $\theta_{\xi, \eta}$. The action is given  by $g\cdot\theta_{\xi,\eta}=\theta_{g\xi, g\eta}$ and it follows that  \[(g\cdot T)(\xi)=\rho_g(T(\rho_{g^{-1}}\xi))\] for $T\in \Kk(\Hh)$. Moreover,  there is an isomorphism \[\Kk(\Hh)\rtimes G\cong \Kk(\Hh\rtimes G),\] see \cite{HN}. Using the universal property, we get an action of $G$ on the Cuntz-Pimsner algebra ${\mathcal O}_A(\mathcal H)$ defined in \cite{K}. It is known that   $\Oo_A(\Hh)$ has a gauge action defined by $z\cdot a=a, \;\;z\cdot \xi=z\xi$ where $z\in \TT$. The action of $G$ commutes with the gauge action, therefore we  get  an action  on the core algebra ${\mathcal O}_A(\mathcal H)^{\mathbb T}$, the fixed point algebra under the gauge action. We recall the following result:

\begin{thm}\label{HN} (G. Hao, C.-K. Ng, \cite{HN}). Let ${\mathcal H}$ be a $C^*$-correspondence over $A$ and let the  locally compact amenable group $G$ act on $({\mathcal H},A)$ via $(\rho, \alpha)$.  Then $G$ acts on ${\mathcal O}_A(\Hh)$ via $\gamma$ and
\[{\mathcal O}_{A\rtimes_\alpha G}(\Hh\rtimes_\rho G)\cong {\mathcal O}_A(\Hh)\rtimes_\gamma G.\]
\end{thm}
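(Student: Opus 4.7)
The plan is to identify both sides as universal $C^*$-algebras for the same class of covariant representations, and then invoke their universal properties to obtain the isomorphism. The hypothesis of amenability is needed to make the Katsura covariance conditions on the two correspondences correspond.

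First I would unpack the right-hand side. A nondegenerate representation of $\Oo_A(\Hh)\rtimes_\gamma G$ is a covariant pair $(\sigma,u)$, where $\sigma$ is a nondegenerate representation of $\Oo_A(\Hh)$ and $u$ is a strongly continuous unitary representation of $G$ with $u_g\sigma(\cdot)u_g^* = \sigma(\gamma_g(\cdot))$. By the universal property of $\Oo_A(\Hh)$, the representation $\sigma$ is in turn determined by a Katsura-covariant Toeplitz-Cuntz-Pimsner pair $(\pi_A,\tau_\Hh)$ on $A$ and $\Hh$. So the data on the right amounts to a triple $(\pi_A,\tau_\Hh,u)$ satisfying $u_g\pi_A(a)u_g^* = \pi_A(\alpha_g a)$ and $u_g\tau_\Hh(\xi)u_g^* = \tau_\Hh(\rho_g\xi)$. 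A nondegenerate representation of the left-hand side is a Katsura-covariant representation $(\Pi_A,T_\Hh)$ of the crossed-product correspondence $\Hh\rtimes_\rho G$ over $A\rtimes_\alpha G$, and by the universal property of $A\rtimes_\alpha G$ the map $\Pi_A$ is the integrated form of a covariant pair $(\pi_A,u)$ for $(A,G,\alpha)$.

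Next I would set up the bijection by integration. Given $(\pi_A,\tau_\Hh,u)$ on the right, define
$\Pi_A(f) = \int_G \pi_A(f(s))u_s\,ds$ and $T_\Hh(\xi) = \int_G \tau_\Hh(\xi(s))u_s\,ds$
for $f\in C_c(G,A)$ and $\xi\in C_c(G,\Hh)$. Using the explicit convolution formulas from Definition 3.2 for the module actions and inner product on $\Hh\rtimes_\rho G$, the Toeplitz-Cuntz-Pimsner relations $T_\Hh(\xi)^*T_\Hh(\eta)=\Pi_A(\la\xi,\eta\ra)$, $T_\Hh(h\cdot\xi)=\Pi_A(h)T_\Hh(\xi)$, and $T_\Hh(\xi\cdot f)=T_\Hh(\xi)\Pi_A(f)$ follow from routine calculations combining the covariance identities with Fubini. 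The reverse direction is the standard extraction of a covariant representation from one of a crossed product.

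The main obstacle — and the source of the amenability hypothesis — is matching the Katsura covariance condition on the two correspondences. Concretely, one needs the identification $J_{\Hh\rtimes_\rho G} \cong J_\Hh\rtimes G$ inside $A\rtimes_\alpha G$, where $J_\Hh = \phi^{-1}(\Kk(\Hh))\cap(\ker\phi)^\perp$. The isomorphism $\Kk(\Hh)\rtimes G\cong\Kk(\Hh\rtimes G)$ recalled above takes care of the $\phi^{-1}(\Kk(\Hh))$ factor, but the $(\ker\phi)^\perp$ factor is delicate: for nonamenable $G$ the kernel of the left action of $A\rtimes G$ on $\Hh\rtimes G$ can be strictly larger than $(\ker\phi)\rtimes G$, so the two Katsura ideals need not agree. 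Amenability forces the full and reduced crossed products to coincide, which aligns the kernels and makes the Katsura ideals correspond. Once this ideal identification is in place, the covariance condition transports through the integration formulas above, and the matched universal properties deliver the claimed isomorphism.
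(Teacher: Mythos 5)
The paper does not actually prove this statement: it is quoted verbatim from Hao--Ng \cite{HN} (``We recall the following result''), so there is no in-paper argument to compare yours against. Judged against the known proof in \cite{HN}, your outline follows essentially the same route: establish a bijection between covariant triples $(\pi_A,\tau_\Hh,u)$ and Katsura-covariant representations of $\Hh\rtimes_\rho G$ via the integrated forms $\int_G\tau_\Hh(\xi(s))u_s\,ds$ (exactly the map $\tau_\Hh\times v$ appearing after Definition 3.4 of the paper), use $\Kk(\Hh)\rtimes G\cong\Kk(\Hh\rtimes G)$ to handle the compact operators, and reduce the whole matter to the identification $J_{\Hh\rtimes_\rho G}=J_\Hh\rtimes G$ of Katsura ideals. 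You have also correctly located where amenability enters --- it is the $(\ker\phi)^\perp$ factor of the Katsura ideal, not the $\phi^{-1}(\Kk(\Hh))$ factor, that causes trouble.

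The one place where your write-up is thinner than a complete proof is precisely that ideal identification. Saying that amenability ``aligns the kernels'' compresses several separate facts that each require an argument: that $\ker(\phi\rtimes G)=(\ker\phi)\rtimes G$, that $(I\rtimes G)^\perp=I^\perp\rtimes G$ for a $G$-invariant ideal $I$, and that intersections of invariant ideals commute with taking crossed products. These are the actual technical content of \cite{HN} (and the reason the ``Hao--Ng isomorphism problem'' for the full crossed product remains open beyond the amenable case), so a self-contained proof would need to supply them rather than appeal to full-equals-reduced in one sentence. As a proposal, though, the architecture is sound and no step you describe would fail.
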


\begin{cor} For $G$ locally compact amenable acting on  ${\mathcal H}$  we have
\[{\mathcal O}_{A\rtimes_\alpha G}(\mathcal H\rtimes_\rho G)^{\mathbb T}\cong {\mathcal O}_A(\mathcal H)^{\mathbb T}\rtimes_\gamma G.\]
\end{cor}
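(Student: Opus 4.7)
The plan is to deduce this corollary by taking $\TT$-fixed points on both sides of the Hao--Ng isomorphism of Theorem~\ref{HN}. The key inputs are: (a) the gauge action of $\TT$ on $\Oo_A(\Hh)$ commutes with the $G$-action $\gamma$, so it lifts to an action of $\TT$ on $\Oo_A(\Hh)\rtimes_\gamma G$ that is trivial on the $G$-variable of $C_c(G,\Oo_A(\Hh))$; (b) under the Hao--Ng isomorphism this lifted $\TT$-action agrees with the gauge action on the ``new'' Cuntz--Pimsner algebra $\Oo_{A\rtimes_\alpha G}(\Hh\rtimes_\rho G)$.

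For (a), the gauge automorphism corresponding to $z\in\TT$ fixes $A\subset \Oo_A(\Hh)$ and scales $\Hh\subset\Oo_A(\Hh)$ by $z$, while each $\gamma_g$ preserves this bigrading; the two actions therefore commute on generators, hence everywhere. For (b), by the universal property of Cuntz--Pimsner algebras it suffices to verify that the two $\TT$-actions coincide on the images of $A\rtimes_\alpha G$ and $\Hh\rtimes_\rho G$ inside $\Oo_{A\rtimes_\alpha G}(\Hh\rtimes_\rho G)$. On $C_c(G,A)\subset A\rtimes_\alpha G$ both actions are trivial, while on $C_c(G,\Hh)\subset \Hh\rtimes_\rho G$ both pointwise multiply by $z$, so by density they agree.

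Finally I would invoke the standard averaging principle: when a compact group $H$ acts on a $C^*$-algebra $B$ and commutes with a locally compact action $\gamma:G\to\operatorname{Aut}(B)$, the conditional expectation $E_H(f)(s)=\int_H h\cdot f(s)\,dh$ on $C_c(G,B)$ implements an isomorphism
\[
\bigl(B\rtimes_\gamma G\bigr)^{H} \;\cong\; B^{H}\rtimes_\gamma G.
\]
Applied with $B=\Oo_A(\Hh)$ and $H=\TT$, together with the $\TT$-equivariance from (b), this yields the stated isomorphism. The only delicate step is (b), which is a matter of tracing generators through the Hao--Ng isomorphism; once that is in hand the corollary is essentially formal, and I anticipate no serious obstacles.
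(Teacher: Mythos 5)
Your proposal is correct and follows essentially the same route the paper intends: the paper states the corollary without proof, relying exactly on the observation (made just before Theorem~\ref{HN}) that $\gamma$ commutes with the gauge action, so that the Hao--Ng isomorphism intertwines the two $\TT$-actions and taking $\TT$-fixed points (via the averaging conditional expectation, valid here since $\TT$ is compact and $G$ amenable) yields the claim. Your step (b), checking the $\TT$-equivariance of the Hao--Ng isomorphism on generators, is the right thing to make explicit and raises no difficulties.
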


\begin{example}
Let $G$ be the symmetric group \[S_3=\la t,s\ra=\{1,t,ts, ts^2, s, s^2\},\]  which acts on $\Hh=\CC^2$ by \[t\cdot e_1=e_2, t\cdot e_2=e_1, s\cdot e_1=we_1, s\cdot e_2=w^2e_2.\] Here $t=(12), s=(123)$, $\{e_1,e_2\}$ is the canonical basis in $\CC^2$, and $w^2+w+1=0$. 

The Hilbert space $\Hh=\CC^2$ is viewed as a $C^*$-correspondence over $A=\CC$ with the usual operations. The group acts trivially on $A$, hence  \[A\rtimes S_3\cong C^*(S_3)\cong \CC\oplus \CC\oplus \MM_2.\] The second isomorphism is obtained by using the minimal central projections
\[p_1=\frac{1}{6}\chi_\iota,\; p_2=\frac{1}{6}\chi_\ve,\; p_3=\frac{1}{3}\chi_\sigma,\]
where 
\[\chi_\iota(1)=\chi_\iota(t)=\chi_\iota(s)=1;\;  \chi_\ve(t)=-1,\; \chi_\ve(1)=\chi_\ve(s)=1,\]\[ \chi_\sigma(1)=2,\;  \chi_\sigma(t)=0,\; \chi_\sigma(s)=-1\]
are the characters of the trivial representation $\iota$, the signature representation $\ve$ and the $2$-dimensional irreducible representation $\sigma$ defined by
\[\sigma(t)=\left[\begin{array}{rr}0&1\\1&0\end{array}\right],\; \sigma(s)=\left[\begin{array}{rr}w&0\\0&w^2\end{array}\right].\]
The crossed product $C^*$-correspondence $\Hh\rtimes S_3$ over $C^*(S_3)$ is a $12$-dimensional vector space with basis $\{\xi_x^j:x\in S_3, j=1,2\}$ where 
$\xi_x^j(y)=e_j$ for $y=x$ and $\xi_x^j(y)=0$ for $y\neq x$. 

To decompose  $\Hh\rtimes S_3$  into pieces according to the decomposition of $C^*(S_3)\cong \CC\oplus \CC\oplus \MM_2$, we need to compute the products $p_i\xi_x^jp_k$ for $i,k\in \{1,2,3\}, x\in S_3$ and $ j\in\{1,2\}$. Now
\[(p_i\xi_x^jp_k)(y)=\sum_{u\in S_3}\left(\sum_{v\in S_3}p_i(v)v\cdot \xi_x^j(v^{-1}u)\right)u\cdot p_k(u^{-1}y)=\]\[=\sum_{u\in S_3}p_i(ux^{-1})p_k(u^{-1}y)(ux^{-1})\cdot e_j.\]
A long computation  shows that
\[p_i(\Hh\rtimes S_3)p_k=0\;\text{for}\;  i,k\in \{1,2\},\]
\[\dim p_i(\Hh\rtimes S_3)p_3=\dim p_3(\Hh\rtimes S_3)p_k=2\;\text{for}\; i, k\in \{1,2\}\]and\[ \dim p_3(\Hh\rtimes S_3)p_3=4.\]
Since $\Oo_A(\Hh)\cong\Oo_2$, using Theorem \ref{HN} and results in \cite{D}, we get that $\Oo_2\rtimes S_3\cong \Oo_{C^*(S_3)}(\Hh\rtimes S_3)$ is Morita-Rieffel equivalent to the graph algebra with incidence matrix 
\[B=\left[\begin{array}{ccc}0&0&1\\0&0&1\\1&1&1\end{array}\right].\]
In particular, it follows that $K_0(\Oo_2\rtimes S_3)\cong \ZZ_2$ and $ K_1(\Oo_2\rtimes S_3)\cong0$.
Since the core algebra $\Oo_2^\TT$ is the UHF-algebra $M_{2^\infty}$, we get an action of $S_3$ on $M_{2^\infty}$ and $K_0(M_{2^\infty}\rtimes S_3)\cong\varinjlim(\ZZ^3,B)$.

\end{example}

\begin{dfn}
Given $C^*$-correspondences $\Hh$ and $\Mm$ over $A$ and $B$ respectively, we say that $\Hh$ and $\Mm$ are  Morita-Rieffel equivalent in case $A$ and $B$ are  Morita-Rieffel equivalent via an imprimitivity bimodule $\Zz$ such that $\Zz\otimes_B\Mm$ and $\Hh\otimes_A\Zz$ are isomorphic as  $C^*$-correspondences  from $A$ to $B$. 
\end{dfn}

Using linking algebras, Muhly and Solel proved in \cite{MS} that for faithful and essential Morita-Rieffel equivalent $C^*$-correspondences $\Hh$ and $\Mm$, the Cuntz-Pimsner algebras $\Oo_A(\Hh)$ and $\Oo_B(\Mm)$ are Morita-Rieffel equivalent. This result was generalized for possibly nonfaithful correspondences in \cite{EKK}.

\begin{rmk} A $C^*$-correspondence  $\Hh$ over $A$ determines an imprimitivity bimodule between $\Kk(\Hh)$  and $A$. The linking algebra $C=\Kk(\Hh\oplus A)$ consists of matrices
\[\left[\begin{array}{cc}T&\xi\\\tilde\eta&a\end{array}\right],\]
where $T\in \Kk(\Hh), \xi\in \Hh, \tilde\eta\in \tilde\Hh, a\in A$. Here $\tilde\Hh=\Kk(\Hh, A)$ denotes the dual of the Hilbert module $\Hh$. 

Assume now that the group $G$ acts on $\Hh$. Then $G$ acts on $\Hh\oplus A$ componentwise and $(\Hh\oplus A)\rtimes G\cong \Hh\rtimes G\oplus A\rtimes G$. If we write
\[C=\left[\begin{array}{cc}\Kk(\Hh)&\Hh\\\tilde{\Hh}&A\end{array}\right],\]
then $G$ acts on $C=\Kk(\Hh\oplus A)$  and 
\[C\rtimes G\cong \Kk(\Hh\rtimes G\oplus A\rtimes G)=\left[\begin{array}{cc}\Kk(\Hh\rtimes G)&\Hh\rtimes G\\\tilde{\Hh}\rtimes G&A\rtimes G\end{array}\right].\] 
In particular,  $\Hh\rtimes G$ determines an imprimitivity bimodule between $\Kk(\Hh\rtimes G)\cong \Kk(\Hh)\rtimes G$  and $A\rtimes G$. 

In order to interpret $\Hh\rtimes G$ as a $C^*$-correspondence  over $A\rtimes G$, we need to define the left multiplication using the map $A\rtimes G\to \Ll(\Hh\oplus A)\rtimes G\cong M(C)\rtimes G$ induced by $\phi: A\to \Ll(\Hh)$.
\end{rmk}

\begin{example}
Suppose that $\Hh$ is a $C^*$-correspondence over $A$ and $\ZZ_2$ acts via an order $2$ automorphism $\alpha$ of $A$ and an order $2$ automorphism $\sigma$ of $\Hh$. Recall that the crossed product $A\rtimes_\alpha \ZZ_2$ is isomorphic with the subalgebra of $\MM_2(A)$ made of matrices \[\left[\begin{array}{cc} a&b\\\alpha(b)&\alpha(a)\end{array}\right]\] with $a,b\in A$. Using the linking algebra, the crossed product $\Hh\rtimes_\sigma \ZZ_2$ is a $C^*$-correspondence over $A\rtimes_\alpha \ZZ_2$ and it can be understood as the set of $2\times 2$ matrices  \[\left[\begin{array}{cc} x&y\\\sigma(y)&\sigma(x)\end{array}\right]\] with $x,y\in \Hh$.
The operations are determined by the equations
\small{
\[\la \left[\begin{array}{cc} x&y\\\sigma(y)&\sigma(x)\end{array}\right], \left[\begin{array}{cc} x'&y'\\\sigma(y')&\sigma(x')\end{array}\right]\ra=\]\[=\left[\begin{array}{cc}\la x,x'\ra+\la y,\sigma(y')\ra&\la x,y'\ra +\la y, \sigma(x')\ra\\\la\sigma(y),x'\ra+\la \sigma(x),\sigma(y')\ra&\la\sigma(y),y'\ra+\la\sigma(x),\sigma(x')\ra\end{array}\right],\]

\bigskip

\[\left[\begin{array}{cc} x&y\\\sigma(y)&\sigma(x)\end{array}\right]\left[\begin{array}{cc} a&b\\\alpha(b)&\alpha(a)\end{array}\right]=\left[\begin{array}{cc} xa+y\alpha(b)&xb+y\alpha(a)\\\sigma(y)a+\sigma(x)\alpha(b)&\sigma(y)b+\sigma(x)\alpha(a)\end{array}\right],\]

\bigskip

\[\left[\begin{array}{cc} a&b\\\alpha(b)&\alpha(a)\end{array}\right]\left[\begin{array}{cc} x&y\\\sigma(y)&\sigma(x)\end{array}\right]=\left[\begin{array}{cc}ax+b\sigma(y)&ay+b\sigma(x)\\\alpha(b)x+\alpha(a)\sigma(y)&\alpha(b)y+\alpha(a)\sigma(x)\end{array}\right].\]
}
\end{example}

\bigskip

\begin{thm}\label{equi}
Suppose  that a localy compact amenable group $G$ acts on faithful and essential  Morita-Rieffel equivalent $C^*$-correspondeces $\Hh$ and $\Mm$ over $A$ and $B$ respectively, via an imprimitivity bimodule $\Zz$. Then $\Zz\rtimes G$ becomes an imprimitivity bimodule between $A\rtimes G$ and $B\rtimes G$. Moreover, $\Oo_A(\Hh)\rtimes G$ is Morita-Rieffel equivalent to $\Oo_B(\Mm)\rtimes G$.
\end{thm}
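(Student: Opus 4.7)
The plan is to prove the theorem in two steps: first promote $\Zz$ to an imprimitivity bimodule between the crossed products via a linking-algebra argument, and then combine the Hao--Ng isomorphism (Theorem~\ref{HN}) with the Muhly--Solel theorem \cite{MS} (and its extension \cite{EKK}) to deduce the Morita-Rieffel equivalence of the Cuntz--Pimsner algebras.

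For the first step, I would follow the linking-algebra picture developed in the preceding remark. Since $\Zz$ is an $A$-$B$-imprimitivity bimodule, the algebra $L=\Kk(\Zz\oplus B)$ takes the $2\times 2$ form $\left[\begin{smallmatrix}A&\Zz\\\tilde\Zz&B\end{smallmatrix}\right]$, and the diagonal $G$-action on $\Zz\oplus B$ induces an action on $L$ that fixes the two corner projections. Applying the crossed product and invoking the isomorphism $\Kk(\Hh)\rtimes G\cong \Kk(\Hh\rtimes G)$ recalled earlier yields $L\rtimes G\cong \Kk((\Zz\rtimes G)\oplus (B\rtimes G))$, whose $(1,1)$ and $(2,2)$ corners are identified with $A\rtimes G$ and $B\rtimes G$ respectively. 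Reading off the $(1,2)$ entry then exhibits $\Zz\rtimes G$ as the desired $(A\rtimes G)$-$(B\rtimes G)$-imprimitivity bimodule.

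For the second step, Theorem~\ref{HN} identifies $\Oo_A(\Hh)\rtimes G$ with $\Oo_{A\rtimes G}(\Hh\rtimes G)$ and $\Oo_B(\Mm)\rtimes G$ with $\Oo_{B\rtimes G}(\Mm\rtimes G)$. By Muhly--Solel \cite{MS}, extended to the possibly-nonfaithful setting in \cite{EKK}, it then suffices to exhibit $\Hh\rtimes G$ and $\Mm\rtimes G$ as Morita-Rieffel equivalent $C^*$-correspondences via the imprimitivity bimodule $\Zz\rtimes G$ constructed in the first step. The required isomorphism $(\Zz\rtimes G)\otimes_{B\rtimes G}(\Mm\rtimes G)\cong (\Hh\rtimes G)\otimes_{A\rtimes G}(\Zz\rtimes G)$ follows by chaining three identifications: the functoriality of the crossed product on the category $\Aa(G)$ supplies canonical isomorphisms $(\Zz\rtimes G)\otimes_{B\rtimes G}(\Mm\rtimes G)\cong (\Zz\otimes_B\Mm)\rtimes G$ and $(\Hh\otimes_A\Zz)\rtimes G\cong (\Hh\rtimes G)\otimes_{A\rtimes G}(\Zz\rtimes G)$, while the middle identification $(\Zz\otimes_B\Mm)\rtimes G\cong (\Hh\otimes_A\Zz)\rtimes G$ comes from applying the crossed product functor to the assumed $G$-equivariant isomorphism $\Zz\otimes_B\Mm\cong \Hh\otimes_A\Zz$ underlying the Morita-Rieffel equivalence of $\Hh$ and $\Mm$.

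The main obstacle I anticipate is precisely verifying the compatibility $(\Hh\otimes_A\Zz)\rtimes G\cong (\Hh\rtimes G)\otimes_{A\rtimes G}(\Zz\rtimes G)$ at the level of $C^*$-correspondences. This is folklore but requires careful bookkeeping of convolutions, modular functions, and the twisted left and right multiplications from Definition~3.2; the cleanest route is via the tensor-product-over-multiplier-algebra description of Remark~3.3, where both sides can be rewritten as $\Hh\otimes_A\Zz\otimes_B(B\rtimes G)$ with matching $G$-actions. A secondary technical point is confirming that $\Hh\rtimes G$ remains faithful and essential over $A\rtimes G$ so that Muhly--Solel applies in its original form: amenability of $G$ ensures that injectivity of $\phi:A\hookrightarrow \Ll(\Hh)$ passes to an injection $A\rtimes G\hookrightarrow \Ll(\Hh\rtimes G)$, and the \cite{EKK} extension removes even this hypothesis if needed.
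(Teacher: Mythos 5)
Your proposal is correct and follows essentially the same route as the paper: the paper establishes the imprimitivity bimodule structure on $\Zz\rtimes G$ by citing Combes \cite{C} (with the linking-algebra picture you spell out already developed in the remark preceding the theorem), and then concludes via exactly the same chain of identifications $(\Zz\rtimes G)\otimes_{B\rtimes G}(\Mm\rtimes G)\cong(\Zz\otimes_B\Mm)\rtimes G\cong(\Hh\otimes_A\Zz)\rtimes G\cong(\Hh\rtimes G)\otimes_{A\rtimes G}(\Zz\rtimes G)$ combined with Hao--Ng and Muhly--Solel. The only difference is that you make explicit the functoriality and compatibility checks that the paper leaves implicit.
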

\begin{proof}
We use results of Combes, see \cite{C}, to realize $\Zz\rtimes G$ as an imprimitivity bimodule between $A\rtimes G$ and $B\rtimes G$. The isomorphism $\Zz\otimes_B\Mm\cong \Hh\otimes_A\Zz$ is equivariant, and \[(\Zz\otimes_B\Mm)\rtimes G\cong (\Zz\rtimes G)\otimes _{B\rtimes G}(\Mm\rtimes G)\cong\]\[\cong (\Hh\rtimes G)\otimes_{A\rtimes G}(\Zz\rtimes G)\cong (\Hh\otimes_A\Zz)\rtimes G,\]
hence $\Hh\rtimes G$  is Morita-Rieffel equivalent to $\Mm\rtimes G$ via $\Zz\rtimes G$. By the results in \cite{MS} combined with \cite{HN}, it follows that $\Oo_A(\Hh)\rtimes G$ is Morita-Rieffel equivalent to $\Oo_B(\Mm)\rtimes G$.
\end{proof}

\begin{rmk}
Note that for $G$ compact, the set of fixed points \[\Hh^G=\{\xi\in \Hh\;\mid\; g\xi=\xi\}\] becomes a $C^*$-correspondence over the fixed point algebra $A^G$. If the action on $A$ is saturated, then  $A^G$ is Morita-Rieffel equivalent with $A\rtimes G$ and   it follows that $\Hh^G$ is Morita-Rieffel equivalent with $\Hh\rtimes G$. In particular, $\Oo_A(\Hh)\rtimes G$ is Morita-Rieffel equivalent with $\Oo_{A^G}(\Hh^G)$, which in some cases might be easier to handle.
\end{rmk}
\begin{rmk}
Consider $j_\Hh:\Hh\to \Oo_A(\Hh)$ the canonical inclusion and let $G$ be a locally compact abelian group acting on $\Hh$. Then the dual $\hat{G}$ acts on $A\rtimes G$ and $\Hh\rtimes G$ in the usual way. Using the map $j_\Hh$ and Takai duality, we obtain   \[(\Hh\rtimes G)\rtimes \hat{G}\subset \Oo_{(A\rtimes G)\rtimes \hat{G}}((\Hh\rtimes G)\rtimes \hat{G})\cong\]\[\cong(\Oo_A(\Hh)\rtimes G)\rtimes \hat{G}\cong \Oo_A(\Hh)\otimes \Kk(L^2(G)).\]
A  duality result for crossed products of Hilbert $C^*$-modules by group  coactions was obtained by Kusuda, see \cite{Ku}.
\end{rmk}

\bigskip

\section{Applications to Cuntz-Pimsner algebras of vector bundles}

\bigskip

Consider $E\to X$  a Hermitian vector bundle of rank $n$, where $X$ is compact, metrizable and path connected. The set of continuous sections $\Gamma(E)$ becomes a $C^*$-correspondence over $C(X)$, with left and right multiplications  given by \[(f\xi)(x)=(\xi f)(x)=f(x)\xi(x)\]
for $f\in C(X), \xi\in \Gamma(E)$ and inner product \[\la \xi, \eta\ra(x)=\la \xi(x), \eta(x)\ra_{E_x}.\] We denote by $\Oo_E$ the Cuntz-Pimsner algebra of the $C^*$-correspondence $\Gamma(E)$ over $C(X)$, which is a (locally trivial) continuous field of Cuntz algebras $\Oo_n$, see Proposition 2 in \cite{V}. In particular, $\Oo_E$ is a $C(X)$-algebra.

\begin{rmk}
The $C^*$-algebra $\Oo_E$ is generated by $C(X)$ and $S_1,...,S_N$ such that
\[f \;S_j=S_j \;f,\; S_j^*S_k=P_{jk},\; \sum_{j=1}^NS_jS_j^*=1\]
where $f\in C(X)$ and $P\in M_N\otimes C(X)$ is the projection that gives the bundle $E$ by the Serre-Swan Theorem.
Since $\Gamma(E)$ is a direct summand of $C(X)^N$, the Hilbert module $\Gamma(E)\rtimes G$ is finitely generated and projective since it is a direct summand of  $C(X)^N\rtimes G\cong (C(X)\rtimes G)^N$.
\end{rmk}

Recall that the cohomology set $H^1(X,U_n)$ classifies the Hermitian vector bundles of rank $n$ over $X$, where $U_n=U_n(\CC)$ denotes the unitary group. The set of isomorphism classes of locally trivial continuous fields over $X$ of $C^*$-algebras with fiber $A$  is in bijection with the cohomology set $H^1(X$, Aut$(A))$. If $(X_i,\alpha_{ij})$ is the cocycle corresponding to the bundle $E$, then the cocycle with values in Aut$(\Oo_n)$ describing $\Oo_E$ is given by 
\[\alpha_{ij}^*(x)(t)=\alpha_{ij}(x)^{\otimes (r+k)}t(\alpha_{ij}(x)^{\otimes r})^*,\;\text{where}\; t\in (n^r,n^{r+k})\subset \Oo_n.\]
Here $\Oo_n$ is viewed as the Doplicher-Roberts algebra associated to the $n$-dimensional Hilbert space $H$, and $(n^r, n^{r+k})$ denotes the set of linear maps $H^{\otimes(r+k)}\to H^{\otimes r}$. We reproduce the following example from \cite{V2}, which shows that for some vector bundles, the field of Cuntz algebras associated to $\Oo_E$ is not trivial.

\begin{example}
Let $X=S^{2k}$ be an even sphere. Recall that $H^1(S^{2k},U_n)\cong \pi_{2k-1}(U_n)$ for $k\ge 1$ and that \[K^0(S^{2k})\cong \ZZ^2, \; K^1(S^{2k})\cong 0.\]  Moreover,  as a ring, $K^0(S^{2k})$ is isomorphic to  $\ZZ[\lambda]/(\lambda^2)$. Tensoring with a fixed vector bundle $E\to S^{2k}$ with class $n+m\lambda$ defines an endomorphism $[E]$ of $K^0(S^{2k})$ given by 
\[[E](x+y\lambda)=nx+(ny+mx)\lambda.\]
Using the six-term exact sequence for the $K$-theory of $\Oo_E$, see \cite{K}  we get
\[\begin{array}{ccccc}\ZZ^2&\stackrel{id-[E]}\longrightarrow&\ZZ^2 &\stackrel{i_0}\longrightarrow& K_0(\Oo_E)\\
0\bigg\uparrow\hspace{2mm} &&{}&&\hspace{2mm}\bigg\downarrow 0\\
K_1(\Oo_E)&\stackrel{i_1}\longleftarrow& 0 & \stackrel{0}\longleftarrow&0\end{array}\]
hence
\[K_0(\Oo_E)\cong\ZZ^2/(id-[E])\ZZ^2,\; K_1(\Oo_E)=0.\]
In the case the bundle $E$ is such that $n-1$ and $m$ are relatively prime and $n\ge 3$, we get \[K_0(\Oo_E)\cong \ZZ/(n-1)^2\ZZ,\] hence  $\Oo_E$ determines a non-trivial field of Cuntz algebras since by the Kunneth formula we have
\[K_0(C(S^{2k})\otimes \Oo_n)\cong \ZZ/(n-1)\ZZ\oplus \ZZ/(n-1)\ZZ.\]
\end{example}
\begin{rmk}
For $X=S^{2k+1}$ an odd sphere and for $[E]=n\in K^0(S^{2k+1})\cong \ZZ$,  Vasselli showed in \cite{V} that we have \[(\Oo_E\otimes\Kk,\ZZ)\cong (C(S^{2k+1})\otimes \Oo_n\otimes\Kk,\ZZ)\]
as graded algebras.
\end{rmk}

\begin{rmk}
If $E$ is a line bundle over $X$, then $\Oo_E$ is commutative with spectrum homeomorphic to the circle bundle of $E$. If $E, F$ are line bundles over $X$, then $\Oo_E \cong \Oo_F$ as $C(X)$-algebras if and only if $E\cong F$ or $E \cong  \bar{F}$, where $\bar{F}$ is the conjugate of $F$, see \cite{Da2}, Proposition 2.2. 
\end{rmk}
In general, for a Hermitian vector bundle $E\to X$ of rank $n$, where $X$ is compact, metrizable and path connected, Dadarlat showed in \cite{Da2} that the principal ideal $(1- [E ])K^0 (X )$ of the $K$-theory ring $K^0(X)$ determines $\Oo_E$ up to isomorphism and an inclusion of principal ideals $(1 - [E])K^0(X)\subseteq (1 - [F])K^0(X)$ corresponds to unital embeddings $\Oo_E \subseteq \Oo_F$ . In particular if  $n\ge 2$, then 
$\Oo_E \cong C(X)\otimes  \Oo_{n}$ if and only if $[E] - 1$ is divisible by $n-1$,  see \cite {Da2}, Theorem 1.1.

If the compact group $G$ acts on the Hermitian vector bundle $E\to X$ by isometries, we define
\[(g\cdot f)(x)=f(g^{-1}x)\; \text{for}\; f\in C(X),\;\;(g\cdot\xi)(x)=g\xi(g^{-1}x)\;\text{for}\; \xi\in \Gamma(E).\]
A section $\xi\in \Gamma(E)$ is invariant if $g\cdot\xi=\xi$. The set of invariant sections forms a subspace $\Gamma(E)^G$ of $\Gamma(E)$. The averaging operator defines a map $\Gamma(E)\to\Gamma(E)^G$.

 Using the universal property,  $G$ acts on the Cuntz-Pimsner algebra $\Oo_E$ and from Theorem \ref{HN}, we get \[\Oo_E\rtimes G\cong \Oo_{C(X)\rtimes G}(\Gamma(E)\rtimes G).\]

 The structure of the crossed product $C(X)\rtimes G$ is known under certain hypotheses. For example, Williams showed in \cite{W} that if $X/G$ is Hausdorff, then $C(X)\rtimes G$ is a section algebra over $X/G$ with fiber over the orbit $Gx$ isomorphic to $C^*(G_x)\otimes\Kk(L^2(G/G_x),\mu)$.  Here $G_x$ is the stabilizer group  and $\mu$ is  any quasi-invariant measure on $G/G_x$.  
In some cases, the study of $C^*$-correspondences over $C(X)\rtimes G$ can be reduced  to understanding the $C^*$-correspondences over abelian $C^*$-algebras $C_0(Y)$ or over continuous trace algebras. 

 \begin{rmk}
 An imprimitivity bimodule over a commutative $C^*$-algebra $C_0(Y)$ can  be regarded as the space of sections of a complex line bundle over $Y$, see Appendix (A) in \cite{Ra}. A general $C^*$-correspondence $\Hh$ over an abelian $C^*$-algebra $C_0(Y)$ is given by the  sections of a continuous field of Hilbert spaces $\{\Hh_y\}$ over $Y$ and by a continuous family of measures $\{\mu_y\}$ on $Y$ which determine the left multiplication $\phi:C_0(Y)\to \Ll(\Hh)$ via a family of  representations $C_0(Y)\to \Ll(\Hh_y)$. If all fibers $\Hh_y$ are nonzero, then $\Hh$ is full. The Cuntz-Pimsner algebras $\Oo_{C_0(Y)}(\Hh)$ recover  large classes of $C^*$-algebras.
 \end{rmk}
 \begin{examples}
 Particular cases of $C^*$-correspondences $\Hh$ over an abelian $C^*$-algebra arise from  topological quivers $(E^0, E^1,s,r,\lambda)$, see  \cite{MT},  or from topological relations, see \cite{Br}. In the first case,   $\Hh$ is obtained as a completion of  $C_c(E^1)$ and the fibers are $\Hh_v=L^2(s^{-1}(v),\lambda_v)$ for $v\in E^0$. Here $\lambda=\{\lambda_v\}_{v\in E^0}$ is a family of Radon measures which defines the inner products with values in $C_0(E^0)$, the source map $s:E^1\to E^0$  defines the right multiplication, $\xi\cdot f(e)=\xi(e)f(s(e))$  and the range map $r:E^1\to E^0$  defines the left multiplication $\phi:C_0(E^0)\to \Ll(\Hh)$, $\phi(f)\xi(e)=f(r(e))\xi(e)$,  for $e\in E^1, \xi\in C_c(E^1), f\in C_0(E^0)$.  
 
This $C^*$-correspondence $\Hh$ over $A=C_0(E^0)$ is essential.  The left multiplication $\phi:C_0(E^0)\to \Ll(\Hh)$ is injective when $r$ is onto, and $\phi(f)\in \Kk(\Hh)$  if and only if  $f\circ r\in C_0(E^1)$ and for every $e\in E^1$ such that $(f\circ r)(e)\neq 0$ there exists a neighborhood $U$ of $e$ such that the restriction $s\mid_U:U\to s(U)$ is a homeomorphism.

If $s: E^1\to E^0$ is a local homeomorphism and $\lambda_v$ are the counting measures, we recover the situation of $C^*$-correspondences associated to toplogical graphs of Katsura, see  \cite{K1}.  
\end{examples}
\begin{example}
Another source of examples of $C^*$-correspondences over  abelian $C^*$-algebras comes from the concepts of diagonal and of  Cartan subalgebra $B$ in a  $C^*$-algebra $A$, see \cite{Kum} and \cite{Re}. Recall that in particular $B$ is a maximal abelian subalgebra  in $A$ and there is a faithful conditional expectation $P:A\to B$, so one can define a $B$-valued  inner product on $A$ by $\la a_1, a_2\ra=P(a_1^*a_2)$.  By taking a completion and using the obvious left and right multiplications, one obtains a $C^*$-correspondence $\Hh$ over $B=C_0(\hat{B})$. It would be interesting to explore what kind of Cuntz-Pimsner algebras $\Oo_B(\Hh)$ one obtain in this way. For example, if $A=\MM_n$ and $B\cong \CC^n$ is the subalgebra of diagonal matrices, then we get the Cuntz algebra $\Oo_n$.
\end{example}


To explore the structure of  the crossed product $\Oo_E\rtimes G$, we first consider  the case of trivial bundles. Let $E=X\times V$, where $V\cong \CC^n$. Then $\Gamma(E)\cong C(X)\otimes V$ and  $\Oo_E\cong C(X) \otimes \Oo_n$. A group action on $X\times V$ is of the form
$g(x,v)=(gx,J(g,x)v)$, where $J:G\times X\to End(V)$ is continuous. We must have $J(e,x)=I$ for all $x\in X$, and from associativity of the action, we obtain the cocycle relation $J(gh,x)=J(g, hx)J(h,x)$.
The structure of $\Oo_E\rtimes G\cong (C(X)\otimes \Oo_n)\rtimes G$ can be determined in principle by understanding the $C^*$-correspondence $(C(X)\otimes V)\rtimes G$ over $C(X)\rtimes G$.

In particular, if the action of $G$ on $E=X\times V$ is free, then $E/G\cong (X/G)\times V$ and  $\Gamma(E/G)\cong C(X/G)\otimes V$. Since $C(X)\rtimes G$ is Morita-Rieffel equivalent with $C(X/G)$ and $(C(X)\otimes V)\rtimes G$ is Morita-Rieffel equivalent with $C(X/G)\otimes V$, it follows that $\Oo_E\rtimes G$ is Morita-Rieffel equivalent with $C(X/G)\otimes \Oo_n$.

If $G$ acts trivially on $X$, then $C(X)\rtimes G\cong C(X)\otimes C^*(G)$ and $(C(X)\otimes V)\rtimes G\cong C(X)\otimes (V\rtimes G)$, so $\Oo_E\rtimes G\cong C(X)\otimes (\Oo_n\rtimes G)$.

 For the more general structure of  $\Oo_E\rtimes G$  we have the following results.

\begin{thm} (Free action). If $G$ compact acts freely on the Hermitian vector bundle $E\to X$, then $\Oo_E\rtimes G$ is Morita-Rieffel equivalent with a continuous field of Cuntz algebras over $X/G$. 
\end{thm}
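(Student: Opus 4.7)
The plan is to invoke the remark in Section 4 concerning fixed points under saturated compact group actions: it asserts that if a compact $G$ acts on a $C^*$-correspondence $\Hh$ over $A$ with the action on $A$ saturated, then $\Oo_A(\Hh)\rtimes G$ is Morita-Rieffel equivalent with $\Oo_{A^G}(\Hh^G)$. Two ingredients must be verified in the present situation: (i) the induced $G$-action on $C(X)$ is saturated, and (ii) the fixed-point correspondence $\Gamma(E)^G$ over $C(X)^G\cong C(X/G)$ is isomorphic to $\Gamma(E/G)$.

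For (i), a free action of a compact group on a compact Hausdorff space is automatically proper, and Rieffel's theory of proper actions (or Green's imprimitivity theorem) then gives that the induced action on $C(X)$ is saturated; the associated imprimitivity bimodule between $C(X/G)$ and $C(X)\rtimes G$ can be realized as a completion of $C(X)$, with $C(X/G)$-valued inner product provided by averaging over $G$ and $C(X)\rtimes G$-valued inner product by convolution. For (ii), the proposition above on $G$-vector bundles over free $G$-spaces identifies $E$ with the pull-back $\pi^{*}(E/G)$, and in this description the linear map induced by $g$ from the fiber over $x$ to the fiber over $g\cdot x$ becomes the identity on the corresponding fiber of $E/G$ over $\pi(x)$. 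Hence a continuous section $\xi(x)=(x,v(x))$ of $E$ is $G$-invariant if and only if $v(g\cdot x)=v(x)$ for all $g$ and $x$, if and only if $v$ descends to a continuous section $\bar v$ of $E/G$. This bijection manifestly preserves the $C(X/G)$-bimodule structure and the inner products, yielding an isomorphism of $C^*$-correspondences $\Gamma(E)^G\cong\Gamma(E/G)$.

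Combining (i) and (ii) with the remark following Theorem \ref{equi} gives that $\Oo_E\rtimes G$ is Morita-Rieffel equivalent with $\Oo_{C(X/G)}(\Gamma(E/G))=\Oo_{E/G}$. Since $E/G\to X/G$ is a Hermitian vector bundle of rank $n$, Proposition 2 of \cite{V} identifies $\Oo_{E/G}$ with a locally trivial continuous field of Cuntz algebras $\Oo_n$ over $X/G$, which is the desired conclusion.

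The main obstacle is the saturation claim (i); part (ii) is essentially descent for continuous sections along the principal bundle $\pi:X\to X/G$ and is already anticipated in Remark 2.5, while the final continuous-field conclusion is a direct appeal to \cite{V}. Saturation for free compact actions is classical but should be referenced precisely, e.g.\ via Rieffel's imprimitivity theorem for proper actions applied to $C(X)$ with generalized fixed-point algebra $C(X/G)$; alternatively one could bypass the remark and construct the imprimitivity bimodule between $\Gamma(E/G)$ and $\Gamma(E)\rtimes G$ by hand from the local product structure of $E$ over $X/G$, then combine Muhly--Solel with Hao--Ng (Theorem \ref{HN}).
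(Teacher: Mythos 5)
Your proposal is correct and follows essentially the same route as the paper: the paper's own (very terse) proof likewise rests on the two facts that $C(X)\rtimes G$ is Morita--Rieffel equivalent to $C(X/G)$ via an imprimitivity bimodule obtained as a completion of $C(X)$, and that $\Gamma(E)\rtimes G$ is Morita--Rieffel equivalent to $\Gamma(E)^G\cong\Gamma(E/G)$, then concludes via the Muhly--Solel/Hao--Ng machinery. You have simply supplied the details (saturation of free compact actions, descent of sections along $\pi:X\to X/G$, and the final appeal to Vasselli) that the paper leaves implicit.
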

\begin{proof} Indeed, in this case $C(X)\rtimes G$ is Morita-Rieffel equivalent with $C(X/G)$ via the imprimitivity bimodule $\Zz$ which is a completion of $C(X)$, and $\Gamma(E)\rtimes G$ is Morita-Rieffel equivalent with $\Gamma(E)^G\cong \Gamma(E/G)$.
\end{proof}

 \begin{example}
 Let $X=S^2$ be the $2$-sphere and let $E=TS^2\otimes \CC$ be the complexified tangent bundle. Even though $E$ is not a trivial bundle, we have $\Oo_E\cong C(S^2)\otimes \Oo_2$, see Theorem 1.1 in \cite{Da1}. 
 
 The group $\ZZ_2=\{e,g\}$ acts on $S^2$ by $g\cdot x=-x$ and on $E$ by its differential $dg$. Since the action is free, $E/\ZZ_2$ is a vector bundle over $S^2/\ZZ_2=\RR P^2$. Moreover,  $C(S^2)\rtimes \ZZ_2$ is Morita-Rieffel equivalent with $C(\RR P^2)$ and  it follows that $\Oo_E\rtimes \ZZ_2$ is Morita-Rieffel equivalent with $C(\RR P^2)\otimes \Oo_2$.
 \end{example}

\begin{thm} (Fiberwise action). If $G$ compact acts  on the Hermitian vector bundle $E\to X$ of rank $n$ and the action on $X$ is trivial, then $\Oo_E\rtimes G$ is a continuous field over $X$ with fibers $\Oo_n\rtimes G$.
\end{thm}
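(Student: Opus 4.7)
The strategy is to show that $\Oo_E\rtimes G$ inherits a $C(X)$-algebra structure from $\Oo_E$, identify its fibers, and then verify continuity of the resulting field. Since $G$ acts trivially on $X$, the induced action on $C(X)\subset Z(M(\Oo_E))$ is trivial, so $C(X)$ sits inside $M(\Oo_E)^G$ and therefore embeds as a nondegenerate central subalgebra of $M(\Oo_E\rtimes G)$, endowing $\Oo_E\rtimes G$ with the structure of a $C(X)$-algebra.

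To identify the fiber at $x\in X$, let $J_x=C_x(X)\cdot \Oo_E$, where $C_x(X)=\{f\in C(X)\mid f(x)=0\}$. Since $C(X)$ is pointwise $G$-fixed, $J_x$ is a $G$-invariant ideal, and the quotient $\Oo_E/J_x$ is the fiber $(\Oo_E)_x\cong\Oo_n$ (by Proposition 2 in \cite{V}); the induced $G$-action is the automorphism action on $\Oo_n$ coming from the fiberwise unitary representation $G\to U(E_x)\cong U_n$. Amenability of the compact group $G$ makes the crossed product functor exact, yielding
\[0\to J_x\rtimes G\to \Oo_E\rtimes G\to \Oo_n\rtimes G\to 0,\]
so the fiber of $\Oo_E\rtimes G$ at $x$ is $\Oo_n\rtimes G$, as claimed.

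The remaining task is to verify continuity of the field, for which I would exploit the local triviality of $\Oo_E$. Over a trivializing neighborhood $U\subset X$, we have $\Oo_E|_U\cong C(U)\otimes \Oo_n$ and the $G$-action takes the form of a strongly continuous family $U\ni x\mapsto \alpha^x\in \text{Aut}(\Oo_n)$ coming from the cocycle $J:G\times U\to U_n$. Combining the embedding $\Oo_E\rtimes G\hookrightarrow \Oo_E\otimes \Kk(L^2(G))$ valid for compact $G$ (noted in Section 3) with the fact that $\Oo_E\otimes \Kk(L^2(G))$ is itself a continuous field over $X$ with fiber $\Oo_n\otimes \Kk(L^2(G))$, one obtains continuity of $x\mapsto\|a(x)\|$ for each $a\in \Oo_E\rtimes G$. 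The main obstacle lies in this last step: upper semicontinuity is automatic from the $C(X)$-algebra structure, but lower semicontinuity requires careful use of the concrete realization of the crossed product together with the continuous dependence of $\alpha^x$ on $x$.
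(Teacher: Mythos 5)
Your proposal is correct in outline but takes a genuinely different route from the paper: the paper's entire proof is a one-line appeal to Theorem 4.1 of Kirchberg--Wassermann \cite{KW}, which says precisely that for an exact (in particular compact) group acting fiberwise on a continuous bundle of $C^*$-algebras, the reduced crossed product is again a continuous bundle with fibers the crossed products of the fibers. What you have written is essentially a direct proof of the special case of that theorem needed here. Your first two steps are sound: since the action on $X$ is trivial, $C(X)$ is $G$-fixed and central, so it embeds centrally into $M(\Oo_E\rtimes G)$, giving an upper semicontinuous $C(X)$-algebra structure; and exactness of $\rtimes G$ for amenable $G$ identifies the fiber at $x$ as $(\Oo_E)_x\rtimes G\cong\Oo_n\rtimes G$ (one should also note that $C_x(X)\cdot(\Oo_E\rtimes G)=J_x\rtimes G$, which holds because $C_x(X)$ is central and fixed). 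The advantage of your approach is that it makes visible exactly where amenability enters; the paper's citation buys brevity and a statement valid for all exact groups.

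The one place your argument as written has a gap is the continuity step, and you have flagged it yourself. The embedding $\Oo_E\rtimes G\hookrightarrow\Oo_E\otimes\Kk(L^2(G))$ into a continuous field does \emph{not} by itself give lower semicontinuity: for a $C(X)$-subalgebra $B\subseteq A$ of a continuous field, the induced map on fibers $B_x\to A_x$ need not be injective, so one only gets $\|b\|_{B_x}\ge\|b\|_{A_x}$, which is the wrong inequality for lower semicontinuity of $x\mapsto\|b\|_{B_x}$. What closes the gap is that the induced fiber map here is the regular representation $\Oo_n\rtimes G\to\Oo_n\otimes\Kk(L^2(G))$, which is injective (hence isometric) because $G$ is compact, so the full and reduced crossed products of the fiber coincide. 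Once the fiber norms of $\Oo_E\rtimes G$ are identified with the restrictions of the fiber norms of the continuous field $\Oo_E\otimes\Kk(L^2(G))$, continuity of $x\mapsto\|a(x)\|$ follows and the local triviality argument is not even needed. This injectivity-of-fibers argument is exactly the content of the Kirchberg--Wassermann result in the amenable case, so your route is viable but you must supply this point explicitly rather than deferring it to ``careful use of the concrete realization.''
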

\begin{proof} Indeed, in this case we have a fiberwise action on the continuous field of Cuntz algebras $\Oo_n$ and we can use Theorem 4.1 in \cite{KW}.
\end{proof}

\begin{rmk}
For $G=S_n$ the symmetric group, we know from \cite{D} that $\Oo_n\rtimes S_n$ is simple and purely infinite.
If $X$ is finite dimensional, Dadarlat gives in \cite{Da1} a complete list of the UCT Kirchberg algebras $D$ with finitely generated $K$-theory  for which every unital separable continuous field over $X$  with fibers isomorphic to $D$ is automatically locally trivial or trivial. 
\end{rmk}

\begin{thm} (Transitive action). Let $G$ be a compact group  and let $H$ be a closed subgroup. Given a Hermitian vector bundle $E$ over $X=G/H$ we know that $E\cong G\times_HV$ for an $H$-module $V$. Then $\Oo_E\rtimes G$ is Morita-Rieffel equivalent to a graph $C^*$-algebra. 
\end{thm}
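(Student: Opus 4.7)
The plan is to reduce $\Oo_E\rtimes G$ to a crossed product $\Oo_n\rtimes H$ over the stabilizer subgroup $H$, by applying Green's imprimitivity theorem simultaneously at the algebra and at the correspondence level, and then to recognize $\Oo_n\rtimes H$ as Morita-Rieffel equivalent to a graph $C^*$-algebra.

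First, Theorem \ref{HN} gives
\[\Oo_E\rtimes G\cong \Oo_{C(G/H)\rtimes G}(\Gamma(E)\rtimes G).\]
By Green's imprimitivity theorem, $C(G/H)\rtimes G$ is Morita-Rieffel equivalent to $C^*(H)$ via an imprimitivity bimodule $Z$ obtained as a completion of $C(G)$. Using the identification
\[\Gamma(E)=\Gamma(G\times_H V)\cong\{f\in C(G,V):f(gh)=h^{-1}f(g)\},\]
I would next show that the correspondence $\Gamma(E)\rtimes G$ over $C(G/H)\rtimes G$ and the correspondence $V\rtimes H$ over $C^*(H)$ (with $V$ viewed as a correspondence over $\CC$ with trivial left action) are Morita-Rieffel equivalent, with the equivalence implemented by the same $Z$. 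Concretely this amounts to producing a natural isomorphism
\[Z\otimes_{C^*(H)}(V\rtimes H)\cong (\Gamma(E)\rtimes G)\otimes_{C(G/H)\rtimes G}Z,\]
a correspondence-level refinement of Green imprimitivity, which can be verified on the dense $C_c$-level data and extended by continuity.

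Once this is in place, Theorem \ref{equi} combined with Theorem \ref{HN} applied to $V$ as a correspondence over $\CC$ yields that $\Oo_E\rtimes G$ is Morita-Rieffel equivalent to
\[\Oo_{C^*(H)}(V\rtimes H)\cong \Oo_\CC(V)\rtimes H=\Oo_n\rtimes H,\]
where $H$ acts on $\Oo_n$ by the quasi-free action determined by the representation $V$. Decomposing $V=\bigoplus_i n_iV_i$ into irreducible $H$-modules, we have $C^*(H)\cong \bigoplus_i M_{d_i}$ with $d_i=\dim V_i$, and $V\rtimes H$ becomes a finite-dimensional $C^*$-correspondence between these matrix blocks whose dimension from block $i$ to block $j$ equals the multiplicity of $V_j$ in $V\otimes V_i$.

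Finally, as illustrated by the $S_3$ example earlier in this section, the Cuntz-Pimsner algebra of such a finite-dimensional correspondence over a finite direct sum of matrix algebras is Morita-Rieffel equivalent to the graph $C^*$-algebra on vertices $\{V_i\}$ whose incidence matrix records the tensor multiplicities above; this last step invokes \cite{D}. The main obstacle I expect is the second step: verifying that Green's bimodule $Z$ can indeed be promoted to a Morita equivalence of $C^*$-correspondences between $\Gamma(E)\rtimes G$ and $V\rtimes H$ with all module and inner-product structures intertwined. The remainder of the argument is then essentially assembly from Theorems \ref{HN} and \ref{equi}.
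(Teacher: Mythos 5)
Your proposal is correct and follows the same skeleton as the paper --- Hao--Ng to rewrite $\Oo_E\rtimes G$ as $\Oo_{C(G/H)\rtimes G}(\Gamma(E)\rtimes G)$, Green imprimitivity to replace $C(G/H)\rtimes G$ by $C^*(H)$, and Morita invariance of Cuntz--Pimsner algebras to finish --- but your middle step is genuinely more refined. The paper stops at the abstract level: $C^*(H)$ is a direct sum of matrix algebras, hence Morita--Rieffel equivalent to $C_0(Y)$ with $Y$ countable and discrete, and \emph{any} correspondence over such a $C_0(Y)$ determines a discrete graph; the transferred correspondence is simply $\tilde{\Zz}\otimes(\Gamma(E)\rtimes G)\otimes\Zz$, which exists automatically and never needs to be computed. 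You instead identify the transferred correspondence concretely as $V\rtimes H$ over $C^*(H)$, which buys two things the paper does not state: the identification of $\Oo_E\rtimes G$ up to Morita--Rieffel equivalence with $\Oo_n\rtimes H$ (with $H$ acting quasi-freely via $V$), and an explicit incidence matrix whose $(i,j)$ entry is the multiplicity of $V_j$ in $V\otimes V_i$ --- consistent with the $S_3$ example in the paper, where $\sigma\otimes\sigma=\iota\oplus\ve\oplus\sigma$ reproduces the matrix $B$. The price is exactly the obstacle you flag: the isomorphism $\Zz\otimes_{C^*(H)}(V\rtimes H)\cong(\Gamma(E)\rtimes G)\otimes_{C(G/H)\rtimes G}\Zz$ is a correspondence-level refinement of Green imprimitivity that is \emph{not} an instance of Theorem \ref{equi} (there the same group acts on both sides, whereas here the groups are $G$ and $H$), so it must be checked by hand on $C_c$-level data; once it is established, the final passage to Cuntz--Pimsner algebras uses the Muhly--Solel/EKK Morita invariance rather than Theorem \ref{equi} proper. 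Neither your write-up nor the paper's carries out that verification in detail, so your argument is at the same level of rigor while delivering a sharper conclusion.
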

\begin{proof}
Indeed, $C(G/H)\rtimes G$ is Morita-Rieffel equivalent with $C^*(H)$ which is a direct sum of matrix algebras. This in turn is Morita-Rieffel equivalent to $C_0(Y)$ with $Y$ at most countable. Now it is known that
a $C^*$-correspondence over $C_0(Y)$  gives rise to a discrete graph. The result follows from Theorem  \ref{equi}  since $\Oo_E\rtimes G\cong \Oo_{C(G/H)\rtimes G}(\Gamma(E)\rtimes G)$.

\end{proof}

\bigskip

\bigskip

\end{document}